\theoremstyle{plain}
\newtheorem{thm}{Theorem}[section]
\newtheorem{lem}[thm]{Lemma}
\newtheorem{prop}[thm]{Proposition}
\theoremstyle{definition}
\newtheorem{dfn}[thm]{Definition}
\newtheorem*{Ack}{Acknowledgement}
\newtheorem{Radford}[thm]{Radford algebras}
\theoremstyle{remark}
\def\k{\ensuremath{\bold{k}}}
\def\B{\mathfrak{B}}
\newcommand*{\e}{\ensuremath{\varepsilon}}
\newcommand*{\field}{\ensuremath{\bold{k}}}
\def\dim{\operatorname{dim}}
\def\Chara{\operatorname{char}}
\begin{document}

\title{Computing indicators of Radford algebras}

\author{Hao Hu}
\email{hah73@pitt.edu}
\author{Xinyi Hu}
\email{xih40@pitt.edu}
\author{Linhong Wang}
\email{lhwang@pitt.edu}
\address{Department of Mathematics\\
University of Pittsburgh, Pittsburgh, PA 15260}

\author{Xingting Wang}
\email{xingting@temple.edu}
\address{Department of Mathematics\\
Temple University,
Philadelphia, PA 19122}

\begin{abstract}
We compute higher Frobenius-Schur indicators of Radford algebras in positive characteristic and find minimal polynomials of these linearly recursive sequences. As a result of Kashina, Montgomery and Ng, we obtain gauge invariants for the monoidal categories of representations of Radford algebras.
\end{abstract}

\subjclass[2010]{16T05}
\keywords{Hopf algebras; FS-indicators; positive characteristic}

\maketitle

\section{Introduction}

In group theory, the Frobenius-Schur (FS) indicator provides a criterion, depending on its possible values $1$, $0$, or $-1$, for determining whether an irreducible representation of a finite group $G$ is real, complex or quaternionic. This result was generalized to any semisimple Hopf algebra over an algebraically closed field of characteristic zero in \cite{LinMon}. Kashina-Montgomery-Ng in \cite{KMN} proposed a definition of higher Frobenius-Schur (FS) indicators for an arbitrary finite-dimensional Hopf algebra, which further generalizes the notion given in \cite{KSZ2} regarding the regular representation of a semisimple Hopf algebra. Moreover, they proved that these indicators are gauge invariant under gauge equivalence in the sense of \cite{Kas}.
Later, the properties of these indicators were further discussed by Shimizu \cite{Shi} mainly focusing on the complex Hopf algebras.

The definition of higher FS indicators of the regular representation of a finite-dimensional Hopf algebra is straightforward by taking the trace of the Sweedler powers followed by the antipode; see \cite[Definition 2.1]{KMN}. But to find their values can be arithmetically challenging over the complex numbers, e.g, the indicators of Taft algebras; see\cite[\S 3]{KMN}. Besides Taft algebra, another well-studied Hopf algebra with simple defining relation is the Radford algebra $R(p)$, which was introduced in \cite[4.13]{Rad} and is over a base algebraically closed field of prime characteristic $p$. It was proved in \cite{WW} that $R(p)$  is  the only noncommutative and noncocommutative pointed Hopf algebra of dimension $p^2$.

In this short note, we find that the higher FS indicators of the Radford algebra $R(p)$ are
\[\Big\{\nu_n(R(p))\Big\}_{ n\geq 1}=\Big\{\underbrace{1, \ldots, 1}_{p-1}, 0, \underbrace{1, \ldots, 1}_{p-1}, 0, \ldots\Big\}.\]
Our approach is via concrete computation involving the left integrals of the Radford algebra and those of its dual Hopf algebra. Our result verified,  in the case of Radford algebra, a theorem by Shimizu on higher FS indicators over positive characteristic, which states that  the sequence of indicators always appears periodically in positive characteristic \cite[Corollary 4.6]{Shi}. As a result of Kashina, Montgomery and Ng, we obtain gauge invariants for the monoidal category of the representation of Radford algebras. Moreover, we also find the minimal polynomial of the sequence of indicators of the Radford algebra.

\begin{Ack}
We began this work in an undergraduate research project at the University of Pittsburgh. We would like to express our gratitude to the math department for hosting a visit of the fourth author in Spring 2016. We are grateful to the referee for her/his careful reading.
\end{Ack}

\section{Preliminary}\label{S:1}
Throughout, $\k$ is an algebraically closed field, $H$ is a finite-dimensional Hopf algebra over $\k$.
We use the standard notation $(H,\,m,\,u,\,\Delta,\,\e,\,S)$, where $m: H\otimes H \to H$ is the multiplication map, $u: \k\to H$ is the unit map, $\Delta: H\to H\otimes H$ is the comultiplication map, $\e: H\to \k$ is the counit map, and $S: H\to H$ is the antipode. The vector space dual of $H$ is also a Hopf algebra and will be denoted by $H^*$. The bialgebra maps and antipode of $H^*$ are given by $(m_{H^*},\,u_{H^*},\,\Delta_{H^*},\,\e_{H^*},\,S_{H^*})=(\Delta^*,\,\e^*,\,m^*,\,u^*,\,S^*)$, where $^*$ is the transpose.
We use the  Sweedler notation $\Delta(h)=\sum h_{(1)}\otimes h_{(2)}$. If $f,\,g\in H^*$, then $fg(h)=\sum f\left(h_{(1)}\right)g\left(h_{(2)}\right)$ for any $h\in H$ and $\e_{H^*}(f)=f(1)$.

\begin{dfn}\cite[Definition 2.1.1]{Mon}
A left integral in $H$ is an element $\Lambda\in H$ such that $h\Lambda=\e(h)\Lambda$, for all $h\in H$; a right integral in $H$ is an element $\Lambda'\in H$ such that $\Lambda'h=\e(h)\Lambda'$ for all $h\in H$. The space of left integrals and the space of right integrals are denoted by $\int^l_H$ and $\int^r_H$, respectively.
\end{dfn}

\begin{lem}\cite[Theorem 2.1.3]{Mon} $\int^l_H$ and $\int^r_H$ are each one-dimensional.
\end{lem}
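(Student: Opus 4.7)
The plan is to deduce the one-dimensionality of $\int^l_H$ from the Larson--Sweedler theorem, whose proof proceeds through the fundamental theorem of Hopf modules. I focus on $\int^l_H$; the statement for $\int^r_H$ follows by symmetry, either by running the entire argument with left and right interchanged, or by using bijectivity of $S$ (itself a byproduct of Larson--Sweedler) to identify $\int^r_H = S(\int^l_H)$.

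First, I would endow $H^*$ with the structure of a right Hopf module over $H$. The right action is the hit action $(f \leftharpoonup h)(k) = f(hk)$, and the right coaction $\rho: H^* \to H^* \otimes H$ is obtained by dualizing the left regular $H^*$-action on itself, which is legitimate because $\dim H < \infty$. A direct computation in Sweedler notation then verifies that $(H^*, \leftharpoonup, \rho)$ satisfies the Hopf module compatibility axiom.

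Next, I would apply the fundamental theorem of Hopf modules, which asserts that any right Hopf module $M$ over $H$ is free of the form $M \cong M^{\mathrm{co}\,H} \otimes H$ as Hopf modules, where $M^{\mathrm{co}\,H} = \{m \in M : \rho(m) = m \otimes 1\}$. Applied to $H^*$, this gives $\dim H^* = \dim (H^*)^{\mathrm{co}\,H} \cdot \dim H$, which forces $\dim (H^*)^{\mathrm{co}\,H} = 1$.

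Finally, I would identify $(H^*)^{\mathrm{co}\,H}$ with $\int^l_{H^*}$: unwinding the definition of $\rho$, the coinvariance condition $\rho(f) = f \otimes 1$ is equivalent to $gf = g(1)\,f$ for every $g \in H^*$, which is precisely the defining property of a left integral in $H^*$. Hence $\dim \int^l_{H^*} = 1$, and applying the same argument to the finite-dimensional Hopf algebra $H^*$ in place of $H$, together with the canonical identification $H^{**} \cong H$, yields $\dim \int^l_H = 1$. The principal obstacle is the fundamental theorem of Hopf modules itself, which I would cite from \cite{Mon} rather than reprove; the other technical point is verifying the Hopf module compatibility, which requires careful bookkeeping with $S$ and $\Delta$.
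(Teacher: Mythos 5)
The paper offers no proof of this lemma---it is quoted verbatim from \cite{Mon}---so what you have written is in effect a reconstruction of the proof of the cited Theorem 2.1.3, namely the Larson--Sweedler argument through the fundamental theorem of Hopf modules. The architecture is the right one: put a right $H$-Hopf module structure on $H^*$, invoke freeness to get $\dim H^* = \dim (H^*)^{\mathrm{co}\,H}\cdot \dim H$, identify the coinvariants with $\int^l_{H^*}$, and pass from $H^*$ to $H$ via $H^{**}\cong H$. Those last two steps are correct as you state them.

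The gap is in the Hopf module structure itself: with the coaction you specify (the one determined by $\varphi f=\sum f_{(0)}\,\varphi(f_{(1)})$ for $\varphi\in H^*$, which is what you need so that the coinvariants are exactly the left integrals), the plain hit action $(f\leftharpoonup h)(k)=f(hk)$ does \emph{not} satisfy the compatibility axiom $\rho(f\cdot h)=\sum f_{(0)}\cdot h_{(1)}\otimes f_{(1)}h_{(2)}$, so the ``direct computation in Sweedler notation'' you defer to would not close. Already for $H=\k[\Z/3]$ with group generator $t$ one has $\rho(\delta_{t^a})=\delta_{t^a}\otimes t^a$ and $\delta_{t^a}\leftharpoonup t^b=\delta_{t^{a-b}}$, so the axiom would force $\delta_{t^{a-b}}\otimes t^{a-b}=\delta_{t^{a-b}}\otimes t^{a+b}$, which fails for $b=1$. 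Your $\leftharpoonup$ is a perfectly good right action, just not one compatible with this coaction; the correct action is the antipode-twisted one, $(f\cdot h)(k)=f\bigl(k\,S(h)\bigr)$, and with that substitution the verification and the rest of your argument go through. One further small point: deducing the right-integral case from $\int^r_H=S(\int^l_H)$ quietly uses bijectivity (or at least surjectivity) of $S$, which is itself part of the Larson--Sweedler package; the left-right-swapped argument you also mention is the cleaner route.
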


\begin{lem}\label{L:intDual}
Suppose $\lambda\in H^*$. Then $\lambda$ is a left integral of $H^*$ if and only if $\sum h_{(1)}\lambda (h_{(2)})=\lambda (h)$ for any $h\in H$. A similar criterion holds for a right integral of $H^*$,i.e., $\lambda$ is a right integral of $H^*$ if and only if $\sum \lambda (h_{(1)})h_{(2)}=\lambda (h)$ for any $h\in H$.
\end{lem}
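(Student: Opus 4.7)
The plan is to unpack the definitions on the $H^{*}$ side and translate the defining equation $f\lambda = \varepsilon_{H^{*}}(f)\,\lambda$ into a statement in $H$ by pairing against arbitrary $f\in H^{*}$ and using the fact that $H^{*}$ separates points of $H$ (valid because $H$ is finite-dimensional). I would begin by writing out what it means for $\lambda$ to be a left integral in $H^{*}$ using the explicit formulas from the preliminary section: $(f\lambda)(h) = \sum f(h_{(1)})\lambda(h_{(2)})$ and $\varepsilon_{H^{*}}(f) = f(1)$. Thus the condition becomes
\[
\sum f(h_{(1)})\lambda(h_{(2)}) \;=\; f(1)\,\lambda(h) \qquad \text{for all } f\in H^{*},\ h\in H.
\]

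For the ``only if'' direction, I would fix $h\in H$ and view both sides above as the result of evaluating $f$ against the element $\sum h_{(1)}\lambda(h_{(2)}) - \lambda(h)\cdot 1_H$ of $H$. Since the displayed equation forces this pairing to vanish for every $f\in H^{*}$, and since $H^{*}$ separates points of $H$, I conclude $\sum h_{(1)}\lambda(h_{(2)}) = \lambda(h)\cdot 1_H$, which is the stated criterion (with the right-hand side understood as a scalar multiple of the unit). For the ``if'' direction I would reverse the argument: given $\sum h_{(1)}\lambda(h_{(2)}) = \lambda(h)\cdot 1_H$, applying any $f\in H^{*}$ to both sides immediately produces $(f\lambda)(h) = f(1)\lambda(h) = \varepsilon_{H^{*}}(f)\lambda(h)$ for all $h$, which is exactly $f\lambda = \varepsilon_{H^{*}}(f)\lambda$.

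The right-integral statement is handled identically, simply by swapping the order in the convolution product so that $(\lambda f)(h) = \sum \lambda(h_{(1)})f(h_{(2)})$ and repeating the separating-points argument on $\sum \lambda(h_{(1)})h_{(2)} - \lambda(h)\cdot 1_H$. I do not expect any serious obstacle here; the only subtlety is the mild notational abuse in writing $\sum h_{(1)}\lambda(h_{(2)}) = \lambda(h)$, which I would flag explicitly as meaning equality inside $H$ after identifying the scalar $\lambda(h)$ with $\lambda(h)\cdot 1_H$.
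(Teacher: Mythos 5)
Your proposal is correct and follows essentially the same route as the paper: both unpack $f\lambda=\e_{H^*}(f)\lambda$ via the convolution formula and then use the arbitrariness of $f$ (i.e., that $H^*$ separates points of $H$) to pass between the two formulations. Your explicit remark that $\lambda(h)$ should be read as $\lambda(h)\cdot 1_H$ is a welcome clarification of the paper's notational shorthand, but the argument is the same.
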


\begin{proof}
By definition, $\lambda$ is a left integral in $H^*$ if and only if $f\lambda = \e_{H^*}(f) \lambda$ for any linear function $f\in H^*$. That is, $f\lambda(h)=\e_{H^*}(f)\lambda(h)$ for any $h\in H$. By duality, this is equivalent to $\sum f\big(h_{(1)}\big)\lambda \big(h_{(2)}\big)=f(1)\lambda(h)$ or  $ f\big(\sum h_{(1)}\lambda(h_{(2)})\big)$ $=f\big(1\lambda(h)\big)$ since $f$ is linear. Note that $f$ is arbitrary in $H^*$. We have $\lambda$ is a left integral in $H^*$ if and only if $\sum h_{(1)}\lambda (h_{(2)})=\lambda (h)$ for any $h\in H$. The proof for right integrals is the same.
\end{proof}

\begin{dfn}\cite[Definition 2.1]{KMN}
Let $n$ be a positive integer. Suppose $h_1, \ldots$, $h_n \in H$. Then the $n$-th power of multiplication is defined as
\[
m^{(n)}(h_1\otimes \cdots \otimes h_n)=h_1\cdots h_n.
\]
Let $h\in H$. The $n$-th power of comultiplication is defined to be
\[
\Delta^{(n)}(h)=\begin{cases}
h & n=1\\
(\Delta^{(n-1)}\otimes \mathrm{id})\left(\Delta(h)\right) & n\geq 2
\end{cases}
\]
The $n$-th Sweedler power of $h$ is defined to be \[P_n(h)=h^{[n]}=
\begin{cases}
\epsilon(h) 1_H & n=0\\
m^{(n)}\circ\Delta^{(n)}(h) & n\geq 1
\end{cases}
\]
The $n$-th indicator of $H$ is given by  $$\nu_n(H)=\mathrm{Tr}\big(S\circ P_{n-1}\big).$$ In particular, $\nu_1(H)=1$ and $\nu_2(H)=\mathrm{Tr}(S)$.
\end{dfn}


Let $H$ and $K$ be two finite-dimensional Hopf algebras over $\field$ such that the two representation categories $\text{Rep}(H)$ and $\text{Rep}(K)$ are monoidally equivalent. By \cite[Theorem 2.2]{NgSch},  $H\cong K^F$, where $K^F$ is a Drinfeld twist by a gauge transformation $F$ on $H$ which satisfies some $2$-cocycle conditions. Then $H$ and $K$ are said to be \emph{gauge equivalent} Hopf algebras.

\begin{thm}\cite[Theorem 2.2 \& Corollary 2.6]{KMN}\label{T:FS}
The sequence $\{\nu_n(H)\}$ is an invariant of the gauge equivalence class of Hopf algebras of $H$, that is, if $H$ and $K$ are gauge equivalent then $\{\nu_n(H)\}=\{\nu_n(K)\}$. Suppose $\lambda\in H^*$ and $\Lambda\in H$ are both left integrals (or both right integrals) such that $\lambda(\Lambda)=1$. Then
\[
\nu_n(H)=\lambda \left(\Lambda^{[n]}\right)
\]
for all positive integers $n$.
\end{thm}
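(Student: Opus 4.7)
The plan is to prove the integral formula $\nu_n(H) = \lambda(\Lambda^{[n]})$ first, and then derive gauge invariance from it as a corollary.

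For the integral formula, I would exploit the fact that every finite-dimensional Hopf algebra is a Frobenius algebra with Frobenius form a nonzero integral $\lambda \in H^*$. Concretely, the bilinear form $(a,b) \mapsto \lambda(ab)$ is nondegenerate on $H$, and the comultiplication of the integral $\Lambda$ produces a canonical dual-basis tensor in $H\otimes H$. This yields a Radford-type trace formula: for every linear endomorphism $f$ of $H$,
\[
\mathrm{Tr}(f) \;=\; \sum \lambda\!\left(f(\Lambda_{(1)})\, S(\Lambda_{(2)})\right),
\]
with the precise Sweedler convention depending on whether one works with left or right integrals. I would then substitute $f = S\circ P_{n-1}$, apply antimultiplicativity $S(a)S(b) = S(ba)$ to merge the two antipodes, and use coassociativity to re-index
\[
\Lambda_{(1,1)}\otimes \cdots \otimes \Lambda_{(1,n-1)}\otimes \Lambda_{(2)} \;=\; \Lambda_{(1)}\otimes\cdots\otimes\Lambda_{(n-1)}\otimes\Lambda_{(n)},
\]
so that the summand collapses to $\lambda\bigl(S(\Lambda_{(1)}\cdots\Lambda_{(n)})\bigr) = \lambda\bigl(S(P_n(\Lambda))\bigr)$. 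Removing the outer antipode to reach $\lambda(\Lambda^{[n]})$ then requires either unimodularity of $H$ (which gives $\lambda\circ S = \lambda$ after suitable normalization) or an argument invoking the distinguished group-like element of $H^*$ together with Lemma~\ref{L:intDual}.

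For the gauge invariance part, once the integral formula is in hand the result is conceptual: by definition, a gauge equivalence $K \cong H^F$ furnishes a monoidal equivalence $\mathrm{Rep}(H) \simeq \mathrm{Rep}(K)$, and under such an equivalence the categorical data encoded by the integrals and by the iterated multiplication in $P_n$ are preserved. Chasing these correspondences through the integral formula yields $\nu_n(H) = \nu_n(K)$, giving the invariance of the entire sequence.

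The main obstacle I expect is the bookkeeping in the trace simplification: tracking Sweedler indices through coassociativity, the antimultiplicativity of $S$, and the integral axioms so that the expression reduces cleanly to $\lambda(S(P_n(\Lambda)))$. The subsequent removal of the outer antipode is the most delicate point, since identifying $\lambda\circ S$ with $\lambda$ on the relevant elements is automatic in the unimodular case but in general requires the machinery of the distinguished group-like element and careful tracking of left-versus-right integral conventions.
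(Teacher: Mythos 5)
The paper does not prove this theorem: it is quoted verbatim from Kashina--Montgomery--Ng \cite[Theorem 2.2 \& Corollary 2.6]{KMN} and used as a black box, so there is no in-paper proof to compare yours against. Judged on its own terms, the first half of your outline is essentially the standard derivation. Radford's trace formula $\mathrm{Tr}(f)=\sum\lambda\bigl(S(\Lambda_{(2)})f(\Lambda_{(1)})\bigr)$ (for appropriately paired integrals), applied to $f=S\circ P_{n-1}$, together with antimultiplicativity of $S$ and coassociativity, does collapse the trace to $\lambda\bigl(S(P_n(\Lambda))\bigr)$. Where your plan wobbles is the last step. You do not need unimodularity, and invoking it (or the distinguished group-like element) is a detour that does not obviously close: the clean move is the identity $S\circ P_n=P_n\circ S$, which follows from $S$ being simultaneously an algebra and coalgebra antihomomorphism, combined with the fact that $S$ carries left integrals to right integrals (in $H$ and in $H^*$). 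This converts $\lambda\bigl(S(P_n(\Lambda))\bigr)$ into the same expression for the opposite-chirality pair of integrals, and is precisely why the theorem is stated for $\lambda$ and $\Lambda$ of the \emph{same} handedness. Also note the aside ``which gives $\lambda\circ S=\lambda$'' would require unimodularity of $H^*$, not of $H$.

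The second half is where the genuine gap lies. Asserting that ``the categorical data encoded by the integrals and by the iterated multiplication in $P_n$ are preserved'' under a gauge twist is asserting the theorem, not proving it: under $H\mapsto H^F$ the comultiplication becomes $F\Delta(\cdot)F^{-1}$ and the antipode is conjugated by an element built from $F$, so neither $P_{n-1}$ nor $S$ nor the integrals is individually preserved --- only the trace of the composite is, and showing that cancellation is the actual content of \cite[Theorem 2.2]{KMN}. A correct argument must either track the twist explicitly through $\mathrm{Tr}(S\circ P_{n-1})$ and exhibit the cancellation of the $F$-factors, or identify $\nu_n(H)$ with a manifestly categorical quantity (as in the later Ng--Schauenburg framework). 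As written, the invariance half of your proposal is a restatement of the claim rather than a proof.
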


\begin{prop}\cite[Corollary 4.6]{Shi}
Suppose $\Chara \k >0$. Then, for any finite dimensional Hopf algebra $H$ over $\k$, the sequence $\{\nu_n(H)\}$ is periodic.
\end{prop}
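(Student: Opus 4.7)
The approach is to use Theorem 2.5, which expresses $\nu_n(H) = \lambda\bigl(\Lambda^{[n]}\bigr) = \lambda\bigl(P_n(\Lambda)\bigr)$ for any left integrals $\Lambda \in H$ and $\lambda \in H^*$ normalized by $\lambda(\Lambda) = 1$, to reduce the claim to a periodicity statement about the Sweedler power maps $\{P_n\}$ inside the finite-dimensional convolution algebra $(\End_\field(H),\, *,\, u\circ\e)$. Once one knows that $\{P_n\}$ is eventually periodic as a sequence in $\End_\field(H)$, the scalar sequence $\nu_n(H) = \lambda(P_n(\Lambda))$ is eventually periodic in $\field$ by linearity.

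The first key step is the identification $P_n = \mathrm{id}_H^{*n}$ in the convolution algebra: for any $h \in H$,
\[
\bigl(\mathrm{id}_H^{*n}\bigr)(h) = \sum h_{(1)} h_{(2)} \cdots h_{(n)} = m^{(n)} \circ \Delta^{(n)}(h) = P_n(h).
\]
Thus $P_n$ is the $n$-th iterate of the $\field$-linear operator $T: \End_\field(H) \to \End_\field(H)$ defined by $T(f) = f * \mathrm{id}_H$, applied to the convolution unit $u\circ\e$. Because $T$ acts on a space of dimension $(\dim H)^2$, the Cayley--Hamilton theorem provides a linear recurrence of order at most $(\dim H)^2$ satisfied by $\{P_n\}$ with coefficients in $\field$; evaluating at $\Lambda$ and applying $\lambda$, the same linear recurrence is inherited by $\{\nu_n(H)\}$.

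The main obstacle is to promote this linear recurrence to genuine eventual periodicity, since over an algebraically closed field containing transcendental elements a linear recurrence can produce unbounded non-periodic sequences (for instance $a_{n+1} = t a_n$ with $t$ transcendental). Here the hypothesis $\Chara \field = p > 0$ enters decisively. The plan is to reduce to finite fields by specialization: the structure constants of $H$ with respect to a fixed basis, together with the coordinates of $\Lambda$ and $\lambda$, lie in a finitely generated $\mathbb F_p$-subalgebra $R \subseteq \field$, and for any maximal ideal $\mathfrak m \subset R$ the residue field $R/\mathfrak m$ is finite by the Nullstellensatz. The reduced convolution algebra $\End_{R/\mathfrak m}(H \otimes_R R/\mathfrak m)$ is a finite set, so the orbit $\{P_n \bmod \mathfrak m\}$ must revisit a point, and the deterministic dynamics $P_{n+1} = T(P_n)$ then force eventual periodicity of the reduced sequence.

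What remains, and is the delicate part, is to combine the finite-field information at sufficiently many $\mathfrak m$ to conclude that the eigenvalues of $T$ on the cyclic subspace generated by $u\circ\e$ lie in $\{0\} \cup \mu$, where $\mu$ denotes the group of all roots of unity in $\overline{\field}$. This amounts to analyzing the commutative subalgebra $\field[\mathrm{id}_H] \subseteq (\End_\field(H), *)$: its Jacobson radical contributes terms that die out after finitely many convolution powers, while in each residue field of a local factor the image of $\mathrm{id}_H$ must be zero or a root of unity by the reduction argument. The root-of-unity eigenvalues produce periodic contributions and the zero eigenvalues produce transients, yielding eventual periodicity of $\{P_n\}$ and hence of $\{\nu_n(H)\}$.
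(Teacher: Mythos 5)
The paper does not actually prove this proposition; it is quoted verbatim from Shimizu \cite[Corollary 4.6]{Shi}, so there is no internal proof to compare against, and I can only assess your argument on its own terms. The setup is fine: $P_n=\mathrm{id}_H^{*n}$ in the convolution algebra, $\nu_n(H)=\lambda(P_n(\Lambda))$, and Cayley--Hamilton giving a linear recurrence of order at most $(\dim H)^2$ all check out (the last is just \cite[Proposition 2.7]{KMN}, also quoted in the paper). But the step you yourself flag as ``the delicate part'' is a genuine gap, and the sketch you offer does not close it. Reducing modulo a maximal ideal $\mathfrak{m}$ of the finitely generated $\mathbb{F}_p$-algebra $R$ does show the reduced sequence is eventually periodic, but the period obtained this way is bounded only by $|R/\mathfrak{m}|^{(\dim H)^2}$, which grows with $\mathfrak{m}$. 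Without a period uniform in $\mathfrak{m}$ you cannot pass back to $R$, even though $\bigcap_{\mathfrak{m}}\mathfrak{m}=0$. Your proposed remedy --- that ``in each residue field of a local factor the image of $\mathrm{id}_H$ must be zero or a root of unity by the reduction argument'' --- is circular: \emph{every} nonzero element of $R$ becomes a root of unity in every finite residue field $R/\mathfrak{m}$, yet need not be one in $R$. Concretely, the sequence $a_n=t^n$ in $R=\mathbb{F}_p[t]$ satisfies a linear recurrence and is periodic modulo every maximal ideal, but is not periodic; nothing in your argument distinguishes $\{\nu_n(H)\}$ from this example. What is actually needed is a genuinely Hopf-theoretic input (this is the content of Shimizu's work, related to Etingof--Gelaki's results on the exponent and quasi-exponent): that in the finite-dimensional commutative convolution algebra $\field[\mathrm{id}_H, S]$ the element $\mathrm{id}_H$ is a root of unity times a unipotent element, so that in characteristic $p$ some power $\mathrm{id}_H^{*p^N M}$ equals the convolution unit $u\circ\e$, giving $P_{n+p^NM}=P_n$ for all $n$. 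Your proposal does not establish this.

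A secondary point: you only ever claim \emph{eventual} periodicity, while the proposition asserts periodicity. This part is easy to repair and would also simplify your discussion of ``transients'': $\mathrm{id}_H$ is invertible in the convolution algebra with inverse $S$, so the operator $T(f)=f*\mathrm{id}_H$ is invertible, an eventually periodic orbit of an invertible map is purely periodic, and there is no zero-eigenvalue contribution on the cyclic subspace generated by $u\circ\e$. You should fold this observation in explicitly rather than treating $0$ as a possible eigenvalue.
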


\begin{dfn}
A sequence $\{a_n\}_{n\ge 1}$ is linearly recursive if there exists a non-zero polynomial $f(x)=f_0+f_1x+f_{m-1}x^{m-1}+f_mx^m$ such that
\[
f_0a_n+f_1a_{n+1}+\cdots+f_ma_{m+n}=0,
\]
for any positive integer $n$. In such a case, we say that $\{a_n\}_{n\ge 1}$ satisfies the polynomial $f(x)$. The monic polynomial of the least degree satisfied by a linearly recursive sequence is called the minimal polynomial of the sequence.
\end{dfn}

\begin{prop}\cite[Proposition 2.7]{KMN}
The sequence $\{\nu_n(H)\}$ is linearly recursive and the degree of its minimal polynomial is at most $(\dim H)^2$. The minimal polynomial is also a gauge invariant, that is, if $H$ and $K$ are gauge equivalent, then $\{\nu_n(H)\}$ and $\{\nu_n(K)\}$ have the same monic minimal polynomial.
\end{prop}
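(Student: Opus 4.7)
The plan is to view the Sweedler powers as iterated convolution powers of the identity in the endomorphism algebra, and then use finite-dimensionality to extract the linear recursion. Recall that $\End(H)$ is an algebra under the convolution product $(f*g)(h)=\sum f(h_{(1)})g(h_{(2)})$ with unit $u\circ\e$. Unwinding the definition, $P_n=m^{(n)}\circ\Delta^{(n)}$ is exactly the $n$-fold convolution power $\Id_H^{*n}$, and $P_0=u\circ\e$ is the convolution unit.

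Since $\End(H)$ has dimension $(\dim H)^2$, the commutative subalgebra (under convolution) generated by $\Id_H$ has dimension at most $(\dim H)^2$. Hence $\Id_H$ satisfies a nonzero monic polynomial $f(x)=x^m+f_{m-1}x^{m-1}+\cdots+f_0$ of degree $m\le(\dim H)^2$ in this convolution algebra, which means $\sum_{i=0}^m f_i P_{n+i}=0$ in $\End(H)$ for every $n\ge 0$ (with $f_m=1$). Now compose on the left by $S$ and take the trace; because both $S\circ(-)$ and $\mathrm{Tr}$ are $\k$-linear on $\End(H)$, the identity becomes $\sum_{i=0}^m f_i\,\mathrm{Tr}(S\circ P_{n+i})=0$, i.e.\ $\sum_{i=0}^m f_i\,\nu_{n+i+1}(H)=0$ for all $n\ge 0$. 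This exhibits $\{\nu_n(H)\}$ as a linearly recursive sequence with annihilating polynomial $f(x)$ of degree at most $(\dim H)^2$; since the minimal polynomial of the sequence divides $f(x)$, its degree is also bounded by $(\dim H)^2$.

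For the gauge-invariance assertion, if $H$ and $K$ are gauge equivalent then $\nu_n(H)=\nu_n(K)$ for every $n$ by Theorem \ref{T:FS}, so the two sequences are literally identical and therefore have the same set of annihilating polynomials and the same monic minimal polynomial.

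The main subtlety is keeping convolution and composition cleanly separated: the bound $(\dim H)^2$ comes from the dimension of the \emph{entire} convolution algebra $\End(H)$ rather than from a direct Cayley--Hamilton bound of $\dim H$ on a single endomorphism under ordinary composition, and the bridge from the convolution-level recursion on $\{P_n\}$ down to the scalar recursion on $\{\nu_n(H)\}$ is precisely the $\k$-linear functional $\mathrm{Tr}\circ(S\circ-)$ on $\End(H)$.
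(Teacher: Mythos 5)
Your argument is correct: identifying $P_n$ with the $n$-th convolution power of $\Id_H$ in $\End(H)$, extracting a monic annihilating polynomial of degree at most $\dim\End(H)=(\dim H)^2$, and pushing the resulting recursion through the linear functional $T\mapsto\mathrm{Tr}(S\circ T)$ is exactly the right mechanism, and the gauge-invariance claim does reduce to the equality of the sequences from Theorem~\ref{T:FS}. The paper itself gives no proof of this proposition --- it is quoted directly from \cite[Proposition 2.7]{KMN} --- but your argument is essentially the standard one underlying that reference, so there is nothing further to compare.
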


Next, we consider a free bialgebra $\B$ and the comultiplication of certain monomials in $\B$. This information will be used later in our computation of indicators of $R(p)$.

\begin{dfn}
Let $\B=\field\langle g,x\rangle$ be the free $\field$-algebra on two generators $g$ and $x$. Equipped with the comultiplication and the counit given by
\[
\Delta(g)=g\otimes g\ \quad \quad \ \Delta(x)=x\otimes 1+g\otimes x
\quad \quad
\e(g)=1\ \quad \text{and}\ \e(x)=0,
\]
the free algebra becomes the free bialgebra $(\B, \, \Delta,\, \e)$. Let $C_{k,l}$ denote the sum of all monomials with $k$ $g$'s and $l$ $x$'s, and $C_{0,0}=1$ and $C_{k,l}=0$ if $k$ or $l<0$ by convention.
\end{dfn}

\begin{lem}\label{L:B}
In the free bialgebra $\B$, we have
\begin{itemize}
\item[(1)] $C_{k,l}=g\,C_{k-1,l}+x\,C_{k,l-1}=C_{k-1,l}\,g+C_{k,l-1}\,x$.
\item[(2)] $\Delta(x^n)=\sum_{k\ge 0} C_{k,n-k}\otimes x^k$ for $n\ge 0$.
\item[(3)] $\Delta(C_{p,q})=\sum_{k\ge 0} C_{p+k,q-k}\otimes C_{p,k}$.
\end{itemize}
\end{lem}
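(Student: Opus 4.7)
My plan has three parts, one for each assertion.

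For part (1), the identity is purely combinatorial: by definition $C_{k,l}$ is the sum, in the free (noncommutative) algebra $\mathfrak{B}$, of all monomials in $g$ and $x$ containing exactly $k$ copies of $g$ and $l$ copies of $x$. Each such monomial either begins with $g$, whose contribution collects to $g\,C_{k-1,l}$, or begins with $x$, whose contribution collects to $x\,C_{k,l-1}$. This proves the left-hand equality; applying the same bookkeeping to the \emph{last} letter of each monomial proves the right-hand equality.

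For part (2), I would induct on $n$. The base case $n=0$ is immediate since $\Delta(1)=1\otimes 1$ and $C_{k,-k}=0$ for $k\ge 1$. For the inductive step, use that $\Delta$ is an algebra map to write $\Delta(x^{n+1})=\Delta(x^n)\Delta(x)$, apply the inductive formula together with $\Delta(x)=x\otimes 1 + g\otimes x$, and obtain
\[
\sum_{k\ge 0} C_{k,n-k}\,x \otimes x^k \;+\; \sum_{k\ge 0} C_{k,n-k}\,g \otimes x^{k+1}.
\]
Reindex the second sum by $k\mapsto k-1$ and combine with the first to get $\sum_{k\ge 0}\bigl(C_{k,n-k}\,x + C_{k-1,n+1-k}\,g\bigr)\otimes x^k$; the right-hand version of part (1) identifies the first tensor factor as $C_{k,n+1-k}$, closing the induction.

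For part (3), I would induct on $p+q$. The case $q=0$ reduces to $\Delta(g^p)=g^p\otimes g^p$ since $C_{p,0}=g^p$ is grouplike, and the case $p=0$ is exactly part (2). For $p,q\ge 1$, use the left-hand equality of part (1) to write $C_{p,q}=g\,C_{p-1,q}+x\,C_{p,q-1}$, apply $\Delta$ as an algebra homomorphism together with the inductive hypothesis for $C_{p-1,q}$ and $C_{p,q-1}$, and expand using $\Delta(x)=x\otimes 1+g\otimes x$. The result splits into three sums; reindexing the $g\otimes x$-cross-term by $k\mapsto k-1$ lets it combine with the sum coming from $\Delta(g\,C_{p-1,q})$ via the identity $g\,C_{p-1,k}+x\,C_{p,k-1}=C_{p,k}$ (part (1) applied to the second tensor slot). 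A final application of part (1) to the first tensor slot assembles the remaining pieces into $\sum_k C_{p+k,q-k}\otimes C_{p,k}$, as desired.

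The main obstacle is the bookkeeping in part (3): one must invoke part (1) twice, once on each tensor factor, at precisely the right stage, and handle the edge cases $p=0$ and $q=0$ separately so that the reindexing of shifted sums is legitimate. Parts (1) and (2) are essentially mechanical once the combinatorial meaning of $C_{k,l}$ is fixed.
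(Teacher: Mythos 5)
Your parts (1) and (2) match the paper's proof essentially verbatim: the first/last-letter decomposition for (1), and the induction $\Delta(x^{n+1})=\Delta(x^n)\Delta(x)$ followed by the right-hand form of (1) to recombine the two sums for (2). For part (3) you take a genuinely different route. The paper avoids computing $\Delta(C_{p,q})$ directly: it applies coassociativity to $x^n$, expands both $(\Delta\otimes\mathrm{id})(\Delta(x^n))$ and $(\mathrm{id}\otimes\Delta)(\Delta(x^n))$ using part (2), and reads off $\Delta(C_{p,q})$ by comparing the coefficients of $x^p$ in the last tensor slot. You instead run an induction on $p+q$ using the recursion $C_{p,q}=g\,C_{p-1,q}+x\,C_{p,q-1}$ and the fact that $\Delta$ is an algebra map; your bookkeeping checks out: after reindexing the $g\otimes x$ cross-term, part (1) applied in the second slot gives $\sum_k g\,C_{p+k-1,q-k}\otimes C_{p,k}$, and a second application in the first slot absorbs the $x\otimes 1$ term to yield $\sum_k C_{p+k,q-k}\otimes C_{p,k}$, with the base cases $p=0$ (part (2)) and $q=0$ ($g^p$ grouplike) handled correctly. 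The paper's coassociativity argument is shorter and gets the formula "for free" from (2) without any further recombination; your induction is more elementary and self-contained but, as you note, requires invoking (1) twice at the right moments. Both are valid proofs.
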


\begin{proof}
(1) is clear, since the leftmost (rightmost) factor of any monomial in the sum $C_{k,l}$ is either $g$ or $x$. For (2), we prove by induction. When $n=0$, $\sum_{k\ge 0} C_{k,n-k}\otimes x^k=C_{0,0}\otimes 1=1\otimes 1=\Delta(1)$. When $n=1$, $\sum_{k\ge 0} C_{k,n-k}\otimes x^k=C_{0,1}\otimes 1+C_{1,0}\otimes x=x\otimes 1+g\otimes x=\Delta(x)$. Suppose $\Delta(x^n)=\sum_{k\ge 0} C_{k,n-k}\otimes x^k$. Then
\begin{align*}
\Delta(x^{n+1})&=\Delta(x^n)\Delta(x)=\big(\sum_{k\ge 0} C_{k,n-k}\otimes x^k\big)\cdot \big(x\otimes 1+g\otimes x\big)\\
&=\sum_{k\ge 0} C_{k,n-k}x\otimes x^k+ \sum_{k\ge 1} C_{k-1,n-k+1}g\otimes x^k\\
&=\sum_{k\ge 0} C_{k,n-k}x\otimes x^k+ \sum_{k\ge 1}\big( C_{k,n-k+1}-C_{k, n-k} x\big)\otimes x^k\\
&=\sum_{k\ge 0} C_{k,(n+1)-k}\otimes x^k.
\end{align*}
To show (3), we use the fact that
\[(\Delta\otimes \mathrm{id})(\Delta(x^n))=(\mathrm{id}\otimes \Delta)(\Delta(x^n)).\]
By (2), we have
\[(\Delta\otimes \mathrm{id})(\Delta(x^n))=\sum_{k\geq 0}\Delta(C_{k,n-k})\otimes x^k=\sum_{p+q=n}\Delta(C_{p,q})\otimes x^p.\]
On the other hand,
{\small
\begin{align*}
(\mathrm{id}\otimes \Delta)(\Delta(x^n))&=\sum_{l\geq 0} C_{l,n-l}\otimes \Delta(x^l)=\sum_{l\geq 0}C_{l, n-l}\otimes \Big(\sum_{p\geq 0}C_{p, l-p}\otimes x^p\Big)\\
&=\sum_{p\geq 0}\sum_{l\geq p}C_{l, n-l}\otimes C_{p, l-p}\otimes x^p\\
&=\sum_{p+q=n}\Big(\sum_{l-p=k\geq 0}C_{p+k, q-k}\otimes C_{p,k}\Big)\otimes x^p.
\end{align*}
}
It then follows that $\Delta(C_{p,q})=\sum_{k\ge 0} C_{p+k,q-k}\otimes C_{p,k}$.
\end{proof}

\begin{lem}\label{L:SP}
In the free bialgebra $\B$, we have
\begin{align*}
\left(g^ix^j\right)^{[n+1]}=\sum_{0\le k_1+\cdots+k_n\le j}    &g^iC_{k_1+\cdots+k_n,j-(k_1+\cdots+k_n)}g^iC_{k_1+\cdots+k_{n-1},k_n}\\
&\cdots g^iC_{k_1,k_2}g^iC_{0,k_1}.
\end{align*}
\end{lem}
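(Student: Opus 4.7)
I would prove the formula by induction on $n\ge 0$. The base case $n=0$ reads $(g^ix^j)^{[1]}=g^ix^j=g^iC_{0,j}$, which matches the claimed expression (no summation indices, a single factor). The entire induction rests on the one-step recursion
\[
(g^ix^j)^{[n+1]} \;=\; \sum_{k=0}^{j} g^i\, C_{k,\,j-k}\, \cdot \,(g^ix^k)^{[n]},
\]
so the first task is to establish this reduction.

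To derive it, I would invoke coassociativity in the form $\Delta^{(n+1)}=(\mathrm{id}\otimes \Delta^{(n)})\circ\Delta$. Since $g$ is group-like we have $\Delta(g^i)=g^i\otimes g^i$, and combining with Lemma~\ref{L:B}(2) yields
\[
\Delta(g^ix^j) \;=\; (g^i\otimes g^i)\,\Delta(x^j) \;=\; \sum_{k\ge 0} g^i\,C_{k,\,j-k}\otimes g^ix^k.
\]
Applying $\mathrm{id}\otimes \Delta^{(n)}$ and then $m^{(n+1)}$, and using the elementary fact $m^{(n+1)}(a\otimes T)=a\cdot m^{(n)}(T)$ for $a\in\mathfrak{B}$ and $T\in\mathfrak{B}^{\otimes n}$, produces the recursion above.

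For the induction step, I would substitute the induction hypothesis for $(g^ix^k)^{[n]}$, written with fresh indices $\ell_1,\dots,\ell_{n-1}$, into the recursion, and then perform the change of variables $k_p=\ell_p$ for $p<n$ and $k_n=k-(\ell_1+\cdots+\ell_{n-1})$. This converts the two constraints $0\le k\le j$ and $0\le \ell_1+\cdots+\ell_{n-1}\le k$ into the single range $k_1,\dots,k_n\ge 0$ with $k_1+\cdots+k_n\le j$. Under this substitution, the outer factor $g^iC_{k,j-k}$ becomes the leading factor $g^iC_{k_1+\cdots+k_n,\,j-(k_1+\cdots+k_n)}$, the leading factor inherited from the inner sum becomes $g^iC_{k_1+\cdots+k_{n-1},\,k_n}$, and the trailing factor $g^iC_{0,\ell_1}$ becomes $g^iC_{0,k_1}$, reproducing the claimed expression exactly. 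The only substantive step is the appeal to coassociativity to reduce an $(n+1)$-fold coproduct to one application of $\Delta$ followed by an $n$-fold one; the remaining work is index bookkeeping, which is where the only real risk of error lies. (As an alternative, one could compute $\Delta^{(n+1)}(g^ix^j)$ directly by iterating $(\Delta\otimes\mathrm{id})$ on the left factor and invoking Lemma~\ref{L:B}(3) at each step, which would produce the tensor string $g^iC_{k_1+\cdots+k_n,\,j-\sum k_p}\otimes\cdots\otimes g^iC_{0,k_1}$, but the recursive formulation above keeps the notation lighter.)
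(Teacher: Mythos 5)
Your proof is correct, and it rests on the same ingredients as the paper's (Lemma \ref{L:B} plus induction), but it is organized differently. The paper first establishes, by induction from Lemma \ref{L:B}(3), a closed formula for the iterated coproduct $\Delta^{(n+1)}(C_{p,q})$ as a sum of $(n+1)$-fold tensors, and then obtains the lemma in a single step from $(g^ix^j)^{[n+1]}=m^{(n+1)}\bigl((g^i\otimes\cdots\otimes g^i)\,\Delta^{(n+1)}(x^j)\bigr)$ together with Lemma \ref{L:B}(2). You instead induct directly on the Sweedler power through the recursion $P_{n+1}(h)=\sum h_{(1)}P_n(h_{(2)})$; since $\Delta(g^ix^j)=\sum_{k}g^iC_{k,j-k}\otimes g^ix^k$, the recursion closes on elements of the form $g^ix^k$, and you never need part (3) of Lemma \ref{L:B} or the tensor-level formula at all. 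Your explicit appeal to coassociativity is the right thing to flag: the paper's $\Delta^{(n)}$ is defined by iterating on the left tensor factor, whereas your recursion peels off the left tensorand and applies $\Delta^{(n)}$ on the right, and generalized coassociativity is exactly what makes the two agree. The change of variables $k_n=k-(\ell_1+\cdots+\ell_{n-1})$ is a bijection between the two index sets and reproduces every factor of the stated formula, so the bookkeeping is sound. The only thing your route forgoes is the standalone expression for $\Delta^{(n+1)}(C_{p,q})$, which the paper never uses again, so nothing is lost by the lighter argument.
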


\begin{proof}
By induction on $n$, using Lemma \ref{L:B}, it is easy to see that
\begin{align*}
\Delta^{(n+1)}\left(C_{p,q}\right)=\notag\sum_{0\le k_1+\cdots+k_n\le q} &C_{p+k_1+\cdots+k_n,q-(k_1+\cdots+k_n)}\otimes  \\
&C_{p+k_1+\cdots+k_{n-1},k_n}\otimes \cdots \otimes C_{p+k_1,k_2}\otimes C_{p,k_1}.
\end{align*}
Therefore, we have
\begin{align*}
\left(g^ix^j\right)^{[n+1]}&=m^{(n+1)}\left(\Delta^{(n+1)}(g^i)\Delta^{(n+1)}(x^j)\right)\\
&=m^{(n+1)}\left(g^i\otimes \cdots \otimes g^i\right)\left(\sum_{k\ge 0} \Delta^{(n)}(C_{k,j-k})\otimes x^k\right)\\
&=\sum_{0\le k_1+\cdots+k_n\le j} g^iC_{k_1+\cdots+k_n,j-(k_1+\cdots+k_n)}\cdots g^iC_{k_1,k_2}g^iC_{0,k_1}.
\end{align*}
\end{proof}

\section{Radford algebras}
In this section, the base field $\field$ is algebraically closed of prime characteristic $p$.

\begin{Radford}
The Radford algebra $R(p)$ was discussed in \cite[4.13]{Rad} over a base field $\field$ of prime characteristic $p$, and was proved in \cite{WW} to be the only noncommutative and noncocommutative pointed Hopf algebra of dimension $p^2$ over $\field$. In fact, one can write $R(p)$ as the quotient Hopf algebra $\B/\mathscr R$, where the ideal $\mathscr R$ of $\B$ is generated by, if $p>2$,
\begin{align}\label{R:R}
g^p-1,\  x^p-x,\ [g,x]-\left(g^2-g\right), \tag{$\mathscr R$}
\end{align}
or
\begin{align*}
g^2-1,\  x^2-x,\ [g,x]-\left(1-g\right),
\end{align*}
if $p=2$.
It is straightforward to check that the Radford algebra $R(p)$ has dimension $p^2$ and the linear basis can be chosen as $\{g^ix^j\, |\, 0\le i,j\le p-1\}$.
We denote by $c_{k,l}$ the image of $C_{k,l}$ (the sum of all monomials with $k$ $g$'s and $l$ $x$'s in $\B$) in $R(p)$ under the projection $\B\to \B/\mathscr R=R(p)$. It follows from \eqref{R:R} that, for $0\le k, l\le p-1$,
\begin{align}\label{E:R1}
c_{k,l}={k+l\choose k}g^kx^l+\sum_{\substack{0\le i\le p-1\\ 0\le j \le l-1}} a_{ij} g^{i}x^{j},\ \text{for some}\ a_{ij}\in \field.
\end{align}
Moreover, the Radford algebra $R(p)$ is self-dual.
The dual basis of $\left(R(p)\right)^*$ to the chosen basis $\{g^ix^j\, |\, 0\le i,j\le p-1\}$ of $R(p)$ is $\{\delta_{g^ix^j}\, |\, 0\leq i, j\leq p-1\}$, where $\delta_{g^ix^j}$ are characteristic functions, that is,
\[
\delta_{g^ix^j}(g^m x^n)=
\begin{cases}
1 & \text{if}\ m=i,\; n=j\\
0 & \text{otherwise}.
\end{cases}
\]
\end{Radford}

\begin{lem}\label{L:R}
For the Radford algebra $R(p)$, the integral spaces are given by
\begin{align*}
\int^l_{R(p)} &=\field \left(\sum_{0\le i\le p-1}g^i\right)\left(\sum_{1\le i\le p-1} (-1)^ix^{i}\right),\\
\int^r_{R(p)} &=\field \left(\sum_{1\le i\le p-1} x^{i}\right)\ \left(\sum_{0\le i\le p-1} g^i\right).
\end{align*}
For the dual Hopf algebra $\left(R(p)\right)^*$, the integral spaces are given by
\begin{align*}
\int^l_{R(p)^*}=\field\, \delta_{gx^{p-1}}\ \text{and}\ \int^r_{R(p)^*}=\field\, \delta_{x^{p-1}},
\end{align*}
\end{lem}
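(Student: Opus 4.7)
The plan is to leverage the one-dimensionality of each integral space (established in the previous lemma): it suffices to exhibit a nonzero element of each space and verify the defining integral property on a set of algebra generators. Since $R(p)$ is generated by $g$ and $x$ with $\e(g)=1$ and $\e(x)=0$, checking that $\Lambda\in R(p)$ is a left integral reduces to verifying $g\Lambda=\Lambda$ and $x\Lambda=0$; the right case is symmetric. For the dual integrals I will invoke Lemma~\ref{L:intDual} directly.

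The key technical input is a commutation formula between $x$ and powers of $g$. Starting from the defining relation $[g,x]=g^2-g$, I would prove by induction on $k$ that $xg^k=g^kx+kg^k(1-g)$. Setting $A:=\sum_{i=0}^{p-1}g^i$ and $T:=\sum_{i=0}^{p-1}ig^i$, summation over $k$ yields $xA=Ax+(1-g)T$. A short telescoping computation using $g^p=1$ and $-(p-1)\equiv 1\pmod{p}$ shows $(1-g)T=A$, producing the clean rules
\[
xA \;=\; A(x+1), \qquad Ax \;=\; (x-1)A.
\]

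With these in hand, the integral claims for $R(p)$ collapse to short computations. For $\Lambda=A\cdot\sum_{j=1}^{p-1}(-1)^jx^j$, the identity $gA=A$ (immediate from $g^p=1$) gives $g\Lambda=\Lambda$, and $xA=A(x+1)$ reduces $x\Lambda=0$ to $(x+1)\sum_{j=1}^{p-1}(-1)^jx^j=0$, which closes by a telescoping cancellation together with $x^p=x$. The right integral $\Lambda'=(\sum_{i=1}^{p-1}x^i)\cdot A$ is handled symmetrically: $\Lambda'g=\Lambda'$ comes from reindexing $\sum g^j$, while $\Lambda'x=0$ combines $Ax=(x-1)A$ with the fact that left multiplication by $x$ cyclically permutes $\{x,x^2,\ldots,x^{p-1}\}$ modulo $x^p-x$.

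For the dual integrals I would test each candidate against the criterion of Lemma~\ref{L:intDual} on the basis $\{g^mx^n\}$. By Lemma~\ref{L:B}(2), $\Delta(g^mx^n)=\sum_{k}g^mc_{k,n-k}\otimes g^mx^k$ in $R(p)\otimes R(p)$. For $\lambda=\delta_{gx^{p-1}}$, only the pair $(m,k)=(1,p-1)$ contributes nontrivially, which forces $n=p-1$ and collapses the sum to $g\cdot c_{p-1,0}=g^p=1=\lambda(gx^{p-1})\cdot 1_H$; all other basis elements give zero on both sides. For $\lambda=\delta_{x^{p-1}}$, the constraint $j\le n-k-1$ in formula~\eqref{E:R1} shows that only the leading monomial $\binom{n}{k}g^{m+k}x^{n-k}$ of $g^mc_{k,n-k}$ can contribute to the $x^{p-1}$-coefficient, which singles out $m=0$, $k=0$, $n=p-1$ and verifies the right-integral identity. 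The main obstacle is the commutator bookkeeping that produces $xA=A(x+1)$ and $Ax=(x-1)A$; once these two identities are secured, everything else is organized but essentially mechanical.
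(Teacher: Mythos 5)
Your proposal is correct and follows essentially the same route as the paper's proof: exhibit the candidate elements, reduce the integral condition to the generators $g$ and $x$, derive $x\bigl(\sum_i g^i\bigr)=\bigl(\sum_i g^i\bigr)(x+1)$ from the commutator formula $[x,g^k]=kg^k(1-g)$ by the same telescoping, and verify the dual integrals via Lemma~\ref{L:intDual} together with Lemma~\ref{L:B}(2) and \eqref{E:R1}. The only difference is that you spell out the right-integral cases that the paper dismisses as ``similar.''
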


\begin{proof}
Note that $\e(g)=1$, $\e(x)=0$, and $\e$ is linear. To show that the element $\Lambda=\left(\sum_{0\le i\le p-1}g^i\right)\left(\sum_{1\le i\le p-1} (-1)^ix^{i}\right)$ is a left integral in $R(p)$, it is sufficient to show that $g\Lambda=\Lambda$ and $x\Lambda=0$. The first equation is obvious. One can check that $[x, g^i]=ig^i(1-g)$. Hence we have
\[
\left[x,\quad \sum_{i=1}^{p-1}g^i\right]
=\sum_{i=1}^{p-1}ig^i(1-g)
=\sum_{i=1}^{p-1}ig^i-\sum_{j=2}^{p}(j-1)g^{j}
=g+\sum_{i=2}^{p-1}g^i+g^p
=\sum_{i=0}^{p-1}g^i,
\]
and so
\begin{align*}
x\Lambda&=x\left(\sum_{0\le i\le p-1}g^i\right)\left(\sum_{1\le i\le p-1} (-1)^ix^{i}\right)\\
&=\left(\sum_{i=0}^{p-1}g^i\right)(x+1)\left(\sum_{1\le i\le p-1} (-1)^ix^{i}\right)=\left(\sum_{i=0}^{p-1}g^i\right)(x^p-x)=0.
\end{align*}
Therefore, $\Lambda$ is a left integral in $R(p)$.

To show that the characteristic function $\delta_{gx^{p-1}}$ is a left integral in $R(p)^*$, it is sufficient, by Lemma \ref{L:intDual}, to verify that
\[\sum h_{(1)}\delta_{gx^{p-1}} (h_{(2)})=\delta_{gx^{p-1}} (h),
\text{ for }h=g^ix^j\in R(p) \text{ with } 0\leq i, j \leq p-1.\]
By Lemma \ref{L:B}, we have
$\Delta(g^ix^j)=\left(g^i\otimes g^i\right)\Delta(x^j)=\sum_{k=0}^j g^i  c_{k,\, j-k}\otimes g^ix^k$.
Hence
\begin{align*}
\sum h_{(1)}\delta_{gx^{p-1}} (h_{(2)})&=\sum_{k=0}^j \Big(g^i  c_{k,\, j-k}\cdot \delta_{gx^{p-1}} (g^ix^k)\Big)\\
&=
\begin{cases}
gc_{p-1, 0}=1 & \text{if}\ i=1,\; j=k=p-1\\
0 & \text{otherwise}.
\end{cases}
\end{align*}
On the other hand,
\[
\delta_{gx^{p-1}} (h)=\delta_{gx^{p-1}}(g^i x^j)=
\begin{cases}
1 & \text{if}\ i=1,\; j=p-1\\
0 & \text{otherwise}.
\end{cases}
\]
Therefore, $\delta_{gx^{p-1}}$ is a left integral in $R(p)^*$. The statements on right integrals can be shown similarly.
\end{proof}

\begin{thm}\label{T:R}
The higher FS indicators of the Radford algebra $R(p)$ are given by
\begin{align*}
\nu_n(R(p))=
\begin{cases}
1 & \text{if}\ n\not\equiv 0 \pmod p\\
0 & \text{if}\ n\equiv 0 \pmod p.
\end{cases}
\end{align*}
\end{thm}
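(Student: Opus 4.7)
The plan is to apply Theorem \ref{T:FS} with the left integral $\Lambda=\sum_{0\le i\le p-1}\sum_{1\le j\le p-1}(-1)^j g^ix^j\in R(p)$ and the left integral $\lambda=\delta_{gx^{p-1}}\in R(p)^*$ produced in Lemma \ref{L:R}. Since the coefficient of $gx^{p-1}$ in $\Lambda$ is $(-1)^{p-1}$, which equals $1$ in $\field$ for every prime $p$, no rescaling is needed and $\nu_n(R(p))=\lambda(\Lambda^{[n]})$. Because $P_{n-1}=m^{(n-1)}\circ\Delta^{(n-1)}$ is $\field$-linear, $\Lambda^{[n]}$ expands as $\sum_{i,j}(-1)^j(g^ix^j)^{[n]}$, and projecting Lemma \ref{L:SP} from $\B$ to $R(p)$ (with $n$ replaced by $n-1$) writes each $(g^ix^j)^{[n]}$ explicitly as a sum indexed by multi-indices $(k_1,\ldots,k_{n-1})$ with partial sums $K_m:=k_1+\cdots+k_m\le j$.

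Next, I evaluate $\lambda$ on this expansion via a top-$x$-degree argument. Equation \eqref{E:R1} gives $c_{k,l}=\binom{k+l}{k}g^kx^l+(\text{strictly lower $x$-degree terms})$, and the relation $xg=gx-g^2+g$ yields $g^ax^b\cdot g^cx^d\equiv g^{a+c}x^{b+d}$ modulo lower $x$-degree. Hence the basis expansion of $(g^ix^j)^{[n]}$ lies in $x$-degrees $\le j$, so $\lambda$ annihilates every summand with $j\ne p-1$. For $j=p-1$, only the top $x$-degree contribution survives, and the product of leading binomial factors $\binom{p-1}{K_{n-1}}\binom{K_{n-1}}{K_{n-2}}\cdots\binom{K_2}{K_1}$ telescopes to the multinomial $\binom{p-1}{k_1,\ldots,k_{n-1},\,p-1-K_{n-1}}$, producing
\[
\delta_{gx^{p-1}}\bigl((g^ix^{p-1})^{[n]}\bigr)=\sum_{\substack{K_{n-1}\le p-1\\ ni+K_1+\cdots+K_{n-1}\equiv 1\pmod p}}\binom{p-1}{k_1,\ldots,k_{n-1},\,p-1-K_{n-1}},
\]
where the congruence comes from matching $g$-exponent $\equiv 1\pmod p$ after using $g^p=1$.

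The conclusion then follows from characteristic-$p$ combinatorics after summing over $i\in\{0,\ldots,p-1\}$. When $p\nmid n$, the linear congruence in $i$ has a unique solution for each $(k_1,\ldots,k_{n-1})$, so the double sum collapses to $\sum\binom{p-1}{k_1,\ldots,k_{n-1},\,p-1-K_{n-1}}=n^{p-1}$ by the multinomial theorem, and Fermat's little theorem gives $n^{p-1}=1$ in $\field$. When $p\mid n$, the congruence condition is independent of $i$, and summing over the $p$ values of $i$ multiplies the inner sum by $p=0$ in $\field$, giving $\nu_n(R(p))=0$. The main technical obstacle is the leading-term bookkeeping: one must justify that the correction terms generated by $xg=gx-g^2+g$ never contribute to the coefficient of $gx^{p-1}$ outside the top $x$-degree stratum, and simultaneously track the total $g$-exponent $ni+\sum_m K_m$ and the telescoping binomial coefficients. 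Once this stratification is in place, the positive-characteristic arithmetic delivers the result cleanly.
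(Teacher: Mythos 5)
Your proposal is correct and follows essentially the same route as the paper: the same pair of left integrals from Lemma \ref{L:R}, the same expansion of $\Lambda^{[n]}$ via Lemma \ref{L:SP}, the same top-$x$-degree reduction through \eqref{E:R1} that kills all $j\neq p-1$, the telescoping of binomials into the multinomial coefficient, and the final split on $p\mid n$ via Fermat's little theorem (your congruence-counting over $i$ is just a reorganization of the paper's evaluation of $\sum_i g^{(n+1)i+\cdots}$). Your explicit flagging of the leading-term bookkeeping ($g^ax^b\cdot g^cx^d\equiv g^{a+c}x^{b+d}$ modulo lower $x$-degree) is a point the paper passes over silently, but it is the same argument.
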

\begin{proof}
By Lemma \ref{L:R}, we choose the left integral $\lambda=\delta_{gx^{p-1}}$ of the dual Hopf algebra $\left(R(p)\right)^*$, and the left integral
$
\Lambda=\left(\sum_{0\le i\le p-1}g^i\right)\left(\sum_{1\le i\le p-1} (-1)^ix^i\right)
 $
of $R(p)$. It is clear that $\lambda(\Lambda)=1$. By Theorem \ref{T:FS}, we have
\begin{align*}
\nu_{n+1}\left(R(p)\right)=\lambda\left(\Lambda^{[n+1]}\right)=\delta_{gx^{p-1}}\left(\sum_{0\le i,j\le p-1} (-1)^j(g^ix^{j})^{[n+1]}\right).
\end{align*}
By Lemma \ref{L:SP} and \eqref{E:R1}, one sees that, for any $0\le i,j\le p-1$,
\begin{align*}
\left(g^ix^j\right)^{[n+1]}\in \text{Span}\left(g^kx^l\, \big|\, 0\le k\le p-1,0\le l\le j\right).
\end{align*}
Hence,
\[\nu_{n+1}\left(R(p)\right)=\delta_{gx^{p-1}}\left(\sum_{0\le i\le p-1} (g^ix^{p-1})^{[n+1]}\right).\]
Suppose $k_1, \ldots,k_n$ are nonnegative integers such that $\sum_{i=1}^nk_i=m$. Recall that the multinomial coefficients are given by
\[
{m \choose k_1, \ldots, k_n}:=\frac{(m!)}{(k_1!)\cdots (k_n!)}.
\]
Assume that $n\ge 1$. Set $k_{n+1}=p-1-k_1-\cdots-k_n$. By Lemma \ref{L:SP}, we have
\begin{align*}
\sum_{0\le i\le p-1} (g^ix^{p-1})^{[n+1]}&\\
=\sum_{\substack{0\le i\le p-1\\ 0\le k_1,\ldots,k_n\le p-1}} &\Bigg(g^ic_{k_1+\cdots+k_n,k_{n+1}}g^ic_{k_1+\cdots+k_{n-1},k_n} \cdots g^ic_{k_1,k_2}g^ic_{0,k_1}\Bigg)\\
=\sum_{\substack{0\le i\le p-1\\ 0\le k_1,\ldots,k_n\le p-1}} &\Bigg({p-1\choose k_1+\cdots+k_n}{k_1+\cdots+ k_n\choose k_n}\cdots {k_1+k_2\choose k_1}\\
g^i(g^{k_1+\cdots+k_n}&x^{k_{n+1}})g^i(g^{k_1+\cdots+k_{n-1}}x^{k_n})\cdots g^i(g^{k_1}x^{k_2})g^i(x^{k_1})\Bigg)\\
=\sum_{\substack{0\le i\le p-1\\ 0\le k_1,\ldots,k_n\le p-1}}&{p-1\choose k_1,\ldots,k_{n+1}}
g^{(n+1)i+nk_1+(n-1)k_2+\cdots+k_n}x^{p-1}.
\end{align*}
Therefore,
\begin{align*}
\nu_{n+1}\left(R(p)\right)&=\\
\sum_{0\le k_1,\ldots,k_{n+1}\le p-1} &{p-1\choose k_1,\ldots,k_{n+1}}
 \delta_{gx^{p-1}}
\left(\sum_{i=0}^{p-1}g^{(n+1)i+nk_1+(n-1)k_2+\cdots+k_n}x^{p-1}\right).
\end{align*}
Suppose the indices $k_1,k_2,\ldots, k_n$ are fixed. Then the inner summation of the above equation becomes
\begin{align*}
\sum_{0\le i\le p-1}&g^{(n+1)i+nk_1+(n-1)k_2+\cdots+k_n}x^{p-1}\\
&=\displaystyle{\begin{cases}
p\left(g^{(nk_1+(n-1)k_2+\cdots+k_n)}x^{p-1}\right)=0 & \text{if}\ p\mid n+1\\
\left(1+g+\cdots+g^{p-1}\right)x^{p-1}& \text{if}\ p\nmid n+1.
\end{cases}}
\end{align*}
In a conclusion, by Fermat's Little Theorem and for $n\geq 1$, we have
\begin{align*}
\nu_{n+1}\left(R(p)\right)=
\begin{cases}
0 & \text{if}\ p\mid n+1\\
\sum_{k_1,\cdots,k_{n+1}} {p-1\choose k_1,\cdots,k_{n+1}}=(n+1)^{p-1}=1  & \text{if}\ p\nmid n+1 .
\end{cases}
\end{align*}
Note that $\nu_{1}\left(R(p)\right)=1$. Therefore, we showed that
\[\Big\{\nu_n(R(p))\Big\}_{ n\geq 1}=\Big\{\underbrace{1, \ldots, 1}_{p-1}, 0, \underbrace{1, \ldots, 1}_{p-1}, 0, \ldots\Big\}.\]

\end{proof}

\begin{prop}
The minimal polynomial of the sequence $\big\{\nu_n(R(p))\big\}$ is \[f(x)=x^p-1.\]
\end{prop}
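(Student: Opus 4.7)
The plan is to first establish that $x^p - 1$ annihilates the sequence $\{\nu_n(R(p))\}$ and then prove that no proper monic divisor does. By Theorem \ref{T:R}, the sequence is periodic of period $p$, so $\nu_{n+p}-\nu_n = 0$ for every $n \geq 1$. This is exactly the statement that $x^p - 1$ annihilates the sequence, and hence the monic minimal polynomial $m(x)$ divides $x^p - 1$.

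The key observation is the characteristic-$p$ collapse $x^p - 1 = (x-1)^p$, which forces every monic divisor of $x^p - 1$ to take the form $(x-1)^k$ for some $0 \leq k \leq p$. Now $(x-1)^k$ annihilates the sequence if and only if the $k$-th forward difference
\[
\Delta^k \nu_n \;=\; \sum_{i=0}^{k} \binom{k}{i}(-1)^{k-i}\,\nu_{n+i}
\]
vanishes for every $n \geq 1$, so the task reduces to pinning down the smallest such $k$.

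To finish, I would verify sharpness by evaluating $\Delta^{p-1}\nu_n$ at $n = 1$. Using $\nu_1 = \nu_2 = \cdots = \nu_{p-1} = 1$ and $\nu_p = 0$ from Theorem \ref{T:R}, the relevant sum differs from $(1-1)^{p-1} = 0$ only in its last summand, where $\nu_p = 0$ replaces a would-be $1$; this gives $\Delta^{p-1}\nu_1 = -\binom{p-1}{p-1}(-1)^0 = -1 \neq 0$ in $\k$. Consequently $(x-1)^{p-1}$ does not annihilate $\{\nu_n(R(p))\}$, forcing $k = p$ and thus $m(x) = (x-1)^p = x^p - 1$.

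The main conceptual step is the reduction via $x^p - 1 = (x-1)^p$, which turns what could a priori be a search over divisors with various $p$-th roots of unity into a question about a single order of vanishing of finite differences; after that, sharpness is a one-line binomial identity and there is no substantive obstacle.
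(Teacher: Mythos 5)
Your proposal is correct, and it reaches the conclusion by a genuinely different route from the paper. Both arguments begin the same way: periodicity of $\{\nu_n(R(p))\}$ with period $p$ (from Theorem \ref{T:R}) shows that $x^p-1$ annihilates the sequence, so the minimal polynomial divides $x^p-1$ and it remains to rule out annihilators of degree less than $p$. The paper does this generically: it supposes $f(x)=f_0+f_1x+\cdots+f_{p-1}x^{p-1}$ is satisfied by the sequence, writes the first $p$ instances of the recurrence as a linear system $A[f_0\ \ldots\ f_{p-1}]^{\mathrm T}=0$ whose coefficient matrix has $0$'s on the anti-diagonal and $1$'s elsewhere, and computes $\det A=\pm(p-1)=\mp 1\neq 0$ in $\k$, forcing $f=0$. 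You instead exploit the characteristic more aggressively: since $x^p-1=(x-1)^p$ over $\k$, every monic divisor is $(x-1)^k$, and $(x-1)^k$ annihilates the sequence exactly when the $k$-th forward difference vanishes identically; the single evaluation $\Delta^{p-1}\nu_1=(1-1)^{p-1}-\binom{p-1}{p-1}=-1\neq 0$ then rules out $k\le p-1$. Your computation checks out, and the divisibility facts you invoke (the annihilating polynomials form an ideal, so the minimal polynomial divides $x^p-1$ and any annihilator of smaller degree would propagate up to $(x-1)^{p-1}$) are standard. What each approach buys: yours replaces a $p\times p$ determinant computation with a one-line binomial identity and makes transparent why the answer is exactly $x^p-1$ rather than a proper divisor; the paper's determinant argument is blind to the factorization and would equally show that \emph{no} polynomial of degree below $p$ works over any field in which $p-1$ is invertible, without needing $x^p-1$ to be a $p$-th power.
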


\begin{proof}
The first $p+1$ terms of $\big\{\nu_n(R(p))\big\}$ are $1,\, \ldots,\, 1,\, 0,\, 1$. The degree of the minimal polynomial can not be less than $p$. Otherwise, $\big\{\nu_n(R(p))\big\}$ satisfies a polynomial $f(x)=f_0+f_1x_1+\cdots+f_{p-1}x^{p-1}$. Then
\[A[f_0\, f_1\, \ldots\, f_{p-1}]^{\mathrm{T}}=0,\]
where $A$ is the matrix with 0's on the anti-diagonal and 1's elsewhere. Note that the determinant of $A$ is $p-1$ or $-(p-1)$. This implies that $f_0= f_1= \cdots= f_{p-1}=0$, contradiction. Hence the degree of the minimal polynomial is at least $p$. One can verify that $\big\{\nu_n(R(p))\big\}$ satisfies the polynomial $f(x)=x^p-1$. This completes the proof.
\end{proof}

\end{document}